\newtheorem{thm}{Theorem}[section]
\newtheorem{lemma}{Lemma}[section]
\newtheorem{rem}{Remark}[section]
\numberwithin{equation}{section}
\begin{document}
\title{On the finiteness of the Morse index of self-shrinkers}
\author{\textsc{Xu-Yong Jiang $\quad$ He-Jun Sun$^\dag$   $\quad$ Peibiao Zhao }}

\date{}

\maketitle

{\narrower
\vskip3mm \noindent {\it\bfseries Abstract}:
In this paper, we present a sufficient condition for finite Morse index of complete properly self-shrinkers.
We prove that a complete properly embedded self-shrinker in $\mathbb{R}^{n+1}$
with  finite asymptotically conical ends or  asymptotically cylindrical ends must have finite Morse index.
Moreover, as an application of this result, we show that a complete properly embedded self-shrinker in $\mathbb{R}^3$ with finite genus has finite Morse index.

\vskip2mm \noindent {\it\bfseries Keywords}: Self-shrinker; Morse index; asymptotically conical; asymptotically cylindrical.

\vskip2mm \noindent {\it\bfseries 2020 Mathematics Subject Classification}: 53C24, 53C42, 53C21.

}

\footnotetext{
$^\dag$  Corresponding author: He-Jun Sun

   This paper was supported by the National Natural Science Foundation of China (Grant Nos.11001130, 11871275) and the Fundamental Research Funds for the Central
Universities (Grant No. 30917011335).}

\section{Introduction}

Self-shrinkers are a special class of solutions
of mean curvature flow in which each time slice is a rescaling of itself.
More precisely, a hypersurface $\Sigma$ in $\mathbb{R}^{n+1}$ is called a self-shrinker if it satisfies
\begin{align}\label{SS}
H=-\frac{1}{2}\langle \mathbf{x}, \mathbf{n}\rangle ,
\end{align}
where $H=-\mathrm{div}(\mathbf{n})$ is the mean curvature of $\Sigma$, $\mathbf{x}$ is the position vector on $\Sigma$ and $\mathbf{n}$ is
the outward unit normal vector of $\Sigma$ at $\mathbf{x}$.
One of important problems about the mean
curvature flows is to understand the singularity that the flow goes through.
Self-shrinkers play a key role in
the study of the mean curvature flow because they describe all possible blowups
at a given singularity.

It is well known that self-shrinkers in $\mathbb{R}^{n+1}$ can be viewed as
critical points of the weighted volume functional
\begin{align}
F(\Sigma)=\int_\Sigma e^{-\frac{|\mathbf{x}|^2}{4}} d\mu,
\end{align}
where $d\mu$ denotes the Riemannian measure associated to the induced metric on $\Sigma$.
In fact, let $\Sigma_t$ be a normal variation of $\Sigma$,
denote $f\mathbf{n}$ be a compact supported variation vector field on $\Sigma$,
then we have
\begin{align}
\frac{d}{dt}\Bigg|_{t=0}F(\Sigma_t)=-\int_\Sigma f\left(H+\langle\mathbf{x},\mathbf{n}\rangle\right)e^{-\frac{|\mathbf{x}|^2}{4}}d\mu.
\end{align}
One can find that $\Sigma$ is a critical point for the weighted volume functional
if and only if it is a self-shrinker.
It is natural to ask whether a self-shrinker is a local minimum of the weighted volume functional.
Denote by $A$ the second fundamental form of $\Sigma$.
Jacobi operator $L$ is defined by
\begin{align}
Lf=\Delta f - \frac{1}{2}\langle \mathbf{x}, \nabla f\rangle + |A|^2 +\frac{1}{2}.
\end{align}
In order to answer the above question, we need the following second variation formula (cf \cite{CM12}):
\begin{align}
I(f,f):=\frac{d^2}{dt^2}\Bigg|_{t=0}  F(\Sigma_t)   =-\int_\Sigma fLf e^{-\frac{|\mathbf{x}|^2}{4}} d\mu,
\end{align}

The theory of self-adjoint elliptic operators tells us that the
eigenvalues of $L$ on a relatively compact domain $\Omega\subset\Sigma$ are
\begin{align}
\lambda_1\le \lambda_2\le\cdots \to \infty.
\end{align}
Define the Morse index $\text{Ind}(\Omega)$ as the number of negative eigenvalues
of $L$ counted with the multiplicity. The domain monotonicity of eigenvalues
implies that if $\Omega\subset\Omega'$, then $\text{Ind}(\Omega)\le \text{Ind}(\Omega')$.
Hence the Morse index of $\Sigma$ is defined by
\begin{align}
\text{Ind}(\Sigma)=\sup\left\{ \text{Ind}(\Omega)\big| \Omega\subset\subset \Sigma \right\} .
\end{align}
It is obvious that $\text{Ind}(\Sigma)$ is always finite if $\Sigma$ is compact and
$\text{Ind}(\Sigma)$ can be infinite if $\Sigma$ is non-compact.
We say that $\Omega\subset\Sigma$ is stable if $\text{Ind}(\Omega)=0$,
namely $I(f,f)\ge 0$ holds for all $f\in C^\infty_0(\Omega)$.
In the instability case, the Morse index of self-shrinkers measures
the number of linearly independent directions that decrease the weighted volume up to second order.

It is an important problem to estimate the Morse index of self-shrinkers (cf. \cite{JSZ2019}).
In 2012, Colding and Minicozzi\cite{CM12} proved that the Morse index of properly
self-shrinkers is greater than or equal to 1.
In 2017, Barbosa, Sharp and Wei\cite{BSW17} gave an upper bound for the Morse index
of closed embedded self-shrinkers in terms of their entropy, genus and maximum radii.
In 2018, Impera, Rimoldi and Savo\cite{IRS19} gave a lower bound for the Morse index of self-shrinkers
in terms of their genus. Sarouis Aiex\cite{SA18} improved this lower bound.
In 2019, Impera\cite{Im19} derived some gap theorems for the Morse index of complete properly immersed self-shrinkers.
Recently, Berchenko and Kogan\cite{BK20} gave upper and lower bounds for the Morse index of rotationally
symmetric self-shrinking tori in terms of their entropy, maximum and minimum radii.
Moreover, their results gave some bounds for the Morse index of Angenent torus.

It is a complicated problem to investigate the Morse index of non-compact self-shrinkers.
As far as we know, there is no finiteness results on the Morse index
for general non-compact self-shrinkers.
In this paper, we first obtain the following result for complete properly embedded self-shrinkers in $\mathbb{R}^{n+1}$ with  finite asymptotically conical ends or  asymptotically cylindrical ends.

\begin{thm} \label{thm.1}
Let $\Sigma$ be a complete properly embedded self-shrinker in $\mathbb{R}^{n+1}$
with  finite asymptotically conical ends or  asymptotically cylindrical ends,
then $\Sigma$ has finite Morse index.
\end{thm}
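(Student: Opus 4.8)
The plan is to prove that $\Sigma$ is stable outside a compact set and then appeal to the standard principle that a Schr\"odinger-type operator on a complete (weighted) manifold has finite Morse index as soon as its quadratic form is nonnegative on functions supported off a compact set. Concretely, it suffices to produce a compact $K\subset\Sigma$ with $I(f,f)\ge 0$ for all $f\in C_0^\infty(\Sigma\setminus K)$. Granting this, I would fix relatively compact sets $K\subset\subset K'$ and Lipschitz $\phi_1,\phi_2$ with $\phi_1^2+\phi_2^2\equiv 1$, $\operatorname{supp}\phi_1\subset K'$ and $\operatorname{supp}\phi_2\subset\Sigma\setminus K$. The localization identity
\[ I(f,f)=I(\phi_1 f,\phi_1 f)+I(\phi_2 f,\phi_2 f)-\int_\Sigma\big(|\nabla\phi_1|^2+|\nabla\phi_2|^2\big)f^2 e^{-|\mathbf{x}|^2/4}\,d\mu \]
shows that $I$ can fail to be nonnegative only on a space controlled by a quadratic form living on the compact set $K'$ (with a bounded zeroth-order perturbation), which by the spectral theory recalled above has finitely many negative directions; hence $\operatorname{Ind}(\Sigma)<\infty$.

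The heart of the matter is a weighted Hardy-type inequality coming from the ground-state substitution. Writing $\rho=|\mathbf{x}|^2/4$ and $f=e^{\rho/2}h$, an integration by parts (with no boundary terms for compactly supported $f$) gives
\[ \int_\Sigma|\nabla f|^2 e^{-\rho}\,d\mu=\int_\Sigma|\nabla h|^2\,d\mu+\int_\Sigma\Big(\tfrac14|\nabla\rho|^2-\tfrac12\Delta\rho\Big)f^2 e^{-\rho}\,d\mu. \]
On a self-shrinker I would compute, from $\nabla\rho=\tfrac12\mathbf{x}^{T}$ and the self-shrinker equation $H=-\tfrac12\langle\mathbf{x},\mathbf{n}\rangle$, that $|\nabla\rho|^2=\tfrac14|\mathbf{x}^{T}|^2$ and $\Delta\rho=\tfrac n2-H^2$, so the effective potential is $V:=\tfrac14|\nabla\rho|^2-\tfrac12\Delta\rho=\tfrac1{16}|\mathbf{x}^{T}|^2+\tfrac12 H^2-\tfrac n4$. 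Discarding the nonnegative term $\int_\Sigma|\nabla h|^2$ and inserting the result into $I(f,f)=\int_\Sigma|\nabla f|^2 e^{-\rho}\,d\mu-\int_\Sigma(|A|^2+\tfrac12)f^2 e^{-\rho}\,d\mu$, together with $|\mathbf{x}^{T}|^2=|\mathbf{x}|^2-\langle\mathbf{x},\mathbf{n}\rangle^2=|\mathbf{x}|^2-4H^2$, produces the pointwise lower bound
\[ I(f,f)\ \ge\ \int_\Sigma\Big(\tfrac1{16}|\mathbf{x}|^2+\tfrac14 H^2-|A|^2-\tfrac n4-\tfrac12\Big)f^2 e^{-\rho}\,d\mu. \]

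It remains to force the bracket to be nonnegative far out. Since $\Sigma$ is properly embedded, $|\mathbf{x}|\to\infty$ along every end, so the quadratic growth $\tfrac1{16}|\mathbf{x}|^2$ will dominate provided $|A|^2$ stays bounded there; and this is exactly what the asymptotic hypotheses guarantee. On an asymptotically conical end the blow-downs converge smoothly to a regular cone, whence $|A|=O(|\mathbf{x}|^{-1})\to0$ (and $\langle\mathbf{x},\mathbf{n}\rangle\to0$); on an asymptotically cylindrical end the end is $C^2$-close to a shrinking cylinder $\mathbb{S}^{k}(\sqrt{2k})\times\mathbb{R}^{n-k}$, on which $|A|^2\to\tfrac12$. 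In either case $|A|^2$ is bounded, and since there are only finitely many ends there exist $R_0,C>0$ with $|A|^2\le C$ on $\Sigma\setminus\overline{B}_{R_0}$. Choosing $R\ge R_0$ so large that $\tfrac1{16}R^2\ge C+\tfrac n4+\tfrac12$ makes the bracket nonnegative on $\{|\mathbf{x}|\ge R\}$; therefore $I(f,f)\ge 0$ for every $f\in C_0^\infty(\Sigma\setminus K)$ with $K=\Sigma\cap\overline{B}_R$, which is compact by properness. Combined with the first paragraph this gives $\operatorname{Ind}(\Sigma)<\infty$.

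The soft parts of this scheme are the weighted inequality and the abstract finiteness criterion. The hard part will be the rigorous derivation of the curvature bounds on the ends directly from the definitions of asymptotically conical and asymptotically cylindrical --- that is, turning $C^2$-convergence of the rescalings into uniform bounds on $|A|$ and on $\langle\mathbf{x},\mathbf{n}\rangle$ that persist all the way out along each end --- together with the bookkeeping needed to select a single radius $R$ that works simultaneously for all of the finitely many ends.
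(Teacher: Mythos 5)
Your argument is correct, but its central analytic step is genuinely different from the paper's. The paper obtains the coercivity estimate $\int|\nabla f|^2e^{-|\mathbf{x}|^2/4}d\mu\ge C\int f^2e^{-|\mathbf{x}|^2/4}d\mu$ far out on each end by first proving a weighted Hardy inequality on the asymptotic model --- a regular cone in polar coordinates (Lemma \ref{lemma.A}) or a generalized cylinder (Lemma \ref{lemma.D}) --- and then transporting it to the actual end through graph estimates $|u|\le C|\mathbf{z}|^{-1}$, $|\nabla u|\le C|\mathbf{z}|^{-2}$ refined from Wang's work (Lemmas \ref{lemma.B}, \ref{lemma.C}, \ref{lemma.E}). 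You instead get the coercivity intrinsically on $\Sigma$ via the ground-state substitution $f=e^{|\mathbf{x}|^2/8}h$: your identities $\nabla\rho=\tfrac12\mathbf{x}^{T}$, $\Delta\rho=\tfrac n2-H^2$ and $|\mathbf{x}^{T}|^2=|\mathbf{x}|^2-4H^2$ all check out, and they yield the effective potential $\tfrac{1}{16}|\mathbf{x}|^2+\tfrac14H^2-\tfrac n4$, so that $I(f,f)\ge 0$ wherever $\tfrac{1}{16}|\mathbf{x}|^2\ge |A|^2+\tfrac n4+\tfrac12$. This bypasses the entire graph-transfer machinery and uses the asymptotic hypotheses only to guarantee that $|A|^2$ stays bounded outside a compact set (a fact the paper also asserts and uses in exactly the same way to get $|A|^2\le 1$ on the ends); as a byproduct your argument proves the formally stronger statement that any complete properly embedded self-shrinker with $|A|$ bounded near infinity has finite Morse index, and it is close in spirit to the discreteness-of-spectrum results of Ding--Xin cited in the bibliography. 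Both proofs then finish with the same soft step --- stability outside a compact set implies finite index --- which the paper records as Lemma \ref{lemma.F} with a citation to Fischer-Colbrie and which you sketch via the partition-of-unity identity; your sketch of that step is at least as detailed as the paper's, though a fully rigorous version still requires checking that the negative space of $I$ injects into a finite-dimensional space attached to the compact piece.
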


\begin{rem}
The proof of Theorem 1.1 leans on the classification for self-shrinkers' ends with
finite genus given in \cite{Wa16b,SW20}.
In \cite{Wa16b}, Wang proved that all ends of self-shrinking surfaces in $\mathbb{R}^3$ with finite genus
are either asymptotically conical or cylindrical.
It is not clear whether similar results holds on higher dimension.
In view of Wang's results, the assumptions of Theorem 1.1 may be not particularly strong.
\end{rem}

\begin{rem}
It is conjectured that the only self-shrinker with
asymptotically cylindrical ends is the generalized cylinder (cf. \cite{Wa16}).
As we know, the Morse index of the generalized cylinder is $n+2$.
Thus, if the conjecture will be proved to be true, then we don't need to consider the asymptotically cylindrical case in the proof of Theorem 1.1.
\end{rem}

Moreover, we consider self-shrinkers in $\mathbb{R}^3$.
In fact, two-dimensional self-shrinking surfaces in $\mathbb{R}^3$ has attracted much attention.
When $\Sigma$ is a non-compact properly embedded self-shrinker in $\mathbb{R}^3$
of finite topology, Wang\cite{Wa16b} showed that each ends of $\Sigma$
are either asymptotically conical or asymptotically cylindrical.
Though Wang stated the result for finite topology, Sun and Wang \cite{SW20} pointed out that
Wang's result is still true for self-shrinkers with finite genus.

As an application of Theorem \ref{thm.1}, the following result can be derived by using Theorem 1.1 in \cite{Wa16b}, Theorem A.1, Lemma A.2 in \cite{SW20} and Theorem \ref{thm.1}.

\begin{thm}\label{thm.2}
Let $\Sigma$ be a complete properly embedded self-shrinker in $\mathbb{R}^3$ with finite genus,
then $\Sigma$ has finite Morse index.
\end{thm}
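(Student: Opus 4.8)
The plan is to deduce Theorem \ref{thm.2} from Theorem \ref{thm.1} by reduction: under the finite-genus hypothesis alone, I would verify that $\Sigma$ possesses only finitely many ends and that each of them is asymptotically conical or asymptotically cylindrical. Once this end structure is established, the hypotheses of Theorem \ref{thm.1} are met and its conclusion applies directly.

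First I would invoke the structural classification of the asymptotic geometry of two-dimensional self-shrinkers. Since $\Sigma$ is a complete properly embedded self-shrinker in $\mathbb{R}^3$ with finite genus, Theorem 1.1 of \cite{Wa16b}, together with its extension to the finite-genus setting recorded in Theorem A.1 and Lemma A.2 of \cite{SW20}, guarantees that outside a sufficiently large ball $B_R(0)$ the surface $\Sigma\setminus B_R(0)$ decomposes into finitely many annular ends, each of which is either asymptotically conical or asymptotically cylindrical. The mechanism is that the finite-genus condition controls the topology of $\Sigma$ away from a compact core, thereby bounding the number of ends, while the self-shrinker equation \eqref{SS} forces the prescribed asymptotic profile on each annular end through the blow-down analysis of Wang.

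With this in hand, $\Sigma$ satisfies precisely the hypotheses of Theorem \ref{thm.1}: it is complete, properly embedded, and has finitely many ends, each asymptotically conical or asymptotically cylindrical. I would then apply Theorem \ref{thm.1} to conclude $\text{Ind}(\Sigma)<\infty$, which is the assertion of Theorem \ref{thm.2}.

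The main obstacle lies not in the final application but in securing the two quantitative features of the end structure, namely that the \emph{number} of ends is finite and that each end carries a well-defined asymptotic type. Both are delicate consequences of the cited works: Wang's original argument assumed finite topology, and the passage to the finite-genus case (in which the number of ends is not postulated a priori) rests on Theorem A.1 and Lemma A.2 of \cite{SW20}. In writing the proof I would therefore make explicit how these results combine to upgrade the hypothesis ``finite genus'' to ``finitely many ends of controlled asymptotic type,'' since this is the only nontrivial link in the chain; the remaining step is a direct invocation of Theorem \ref{thm.1}.
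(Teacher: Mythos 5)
Your proposal matches the paper's argument exactly: the paper derives Theorem \ref{thm.2} by combining Theorem 1.1 of \cite{Wa16b} and Theorem A.1, Lemma A.2 of \cite{SW20} (to get finitely many ends, each asymptotically conical or cylindrical, from the finite-genus hypothesis) with Theorem \ref{thm.1}. Your identification of the reduction and of the only nontrivial link in the chain is correct and is the same route the authors take.
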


\begin{rem}
Minimal surfaces and self-shrinkers are two kinds of geometric objects with many
similar geometric properties. It is well known that a complete minimal surface in $\mathbb{R}^3$
has finite Morse index if and only if it has finite total curvature. Indeed,
if $\Sigma$ is a complete minimal surface, there exists an absolute constant $C$ such that
\begin{align*}
\text{Ind}(\Sigma)\le C\int_\Sigma -K d\mu,
\end{align*}
where $K=-\frac{1}{2}|A|^2$ is the Gauss curvature of $\Sigma$.
There have been a number of results that relate the Morse index to the total curvature (for example, see \cite{GNY04} and the references therein).
The results of this paper inspire us to ask  whether there is an absolute constant $C$ such that
\begin{align*}
\text{Ind}(\Sigma)\le C\int_\Sigma (|A|^2+1)e^{-\frac{|x|^2}{4}} d\mu,
\end{align*}
holds for a complete properly embedded self-shrinker $\Sigma$.
It is an interesting question that deserves further consideration.
\end{rem}
\begin{rem}
Recently, we learned about the work \cite{ANZ21} of  Alencar, Neto and Zhou. They proved that
a self-shrinker in $\mathbb{R}^3$ with finite Morse index must be proper and with finite topology.
Combing with Theorem \ref{thm.2} of our paper, one can find that
a complete embedded self-shrinker in $\mathbb{R}^3$ has finite Morse index if and only if it was proper and with finite topology.
\end{rem}

\section{Proof of Theorem \ref{thm.1}}

Here we first give some notations and definitions which will be used.
Denote by $ B_r$  the open ball in $\mathbb{R}^{n+1}$ centered at $O$ with radius
$r$, where $O$ is the origin of $\mathbb{R}^{n+1}$.
We say that $\mathcal{C}\subset \mathbb{R}^{n+1}$ is a regular cone with vertex at $O$ generated by $\Gamma$,
if
\begin{align}
\mathcal{C}=\{l\Gamma: 0\le l < \infty \}
\end{align}
where $\Gamma$ is a smooth closed embedded submanifold of unit sphere $\mathbb{S}^n$ with codimension one.
A hypersurface $\Sigma \subset \mathbb{R}^{n+1}$ is said to be asymptotically conical
if there exists a regular cone $\mathcal{C}$ with vertex at $O$ such that $\lambda \Sigma$ converges smoothly in
compact sets to $\mathcal{C}$ as $\lambda\to 0^+$. That is to say, for any $r>0$,
$\lambda \Sigma \cap \{ B_r\backslash B_{1/r}\}$ converges to
$\mathcal{C} \cap \{ B_r\backslash B_{1/r}\}$ in the $C^k$ topology as $\lambda\to 0^+$.
Similarly, a hypersurface $\Sigma \subset \mathbb{R}^{n+1}$ is said to be asymptotically cylindrical
if exists a unit vector $\xi\in \mathbb{R}^{n+1}$ such that $\Sigma-\lambda \xi$ converges smoothly
in compact sets to the generalized cylinder
$\mathbb{S}^k({\sqrt{2k}})\times \mathbb{R}^{n-k}$
as $\lambda\to\infty$.

Now we establish two integral inequalities for regular cones and generalized cylinders in Lemmas \ref{lemma.A} and \ref{lemma.D}, which will paly roles in the proofs of Lemmas \ref{lemma.C} and \ref{lemma.E}. We first consider regular cones.

\begin{lemma} \label{lemma.A}
Let $\mathcal{C} \subset \mathbb{R}^{n+1}$ be a regular cone with vertex at $O$.
For any given positive constant $C$, there exists a positive constant
$r$ such that for all $f\in C^\infty_0(\mathcal{C}\backslash B_{r})$,
\begin{equation}\label{lemma3.1}
\int_{\mathcal{C}\backslash B_{r}}|\nabla f(\mathbf{x})|^2 e^{-\frac{|\mathbf{x}|^2}{4}} d\mu
\geq
C\int_{\mathcal{C}\backslash B_{r}} f^2(\mathbf{x}) e^{-\frac{|\mathbf{x}|^2}{4}}d\mu,
\end{equation}
where $\nabla$ is the gradient operator on $\mathcal{C}$.
\end{lemma}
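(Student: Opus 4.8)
The plan is to reduce this weighted inequality on the $n$-dimensional cone to a one-dimensional weighted Poincar\'e inequality along the radial direction, where the strong Gaussian decay of $e^{-|\mathbf{x}|^2/4}$ forces the spectral gap to blow up as one moves away from the vertex. The point is that on $\mathcal{C}$ the gradient controls at least its radial component, and against the weight $e^{-l^2/4}$ the radial part already carries an arbitrarily large Poincar\'e constant once we are far enough out.

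First I would introduce coordinates adapted to the cone. Writing $\mathbf{x}=l\theta$ with $l=|\mathbf{x}|\in(0,\infty)$ and $\theta\in\Gamma$, the induced metric on $\mathcal{C}$ is $dl^2+l^2 g_\Gamma$, so the volume element is $d\mu=l^{n-1}\,dl\,d\sigma_\Gamma$ and the gradient splits as $|\nabla f|^2=(\partial_l f)^2+l^{-2}|\nabla_\Gamma f|^2\ge(\partial_l f)^2$. Discarding the nonnegative angular term and applying Fubini, it suffices to prove, for the $\theta$-independent weight $w(l):=e^{-l^2/4}l^{n-1}$, the one-dimensional estimate
\[
\int_r^\infty (f')^2\,w\,dl\ \ge\ C\int_r^\infty f^2\,w\,dl
\]
for every $f\in C^\infty_0((r,\infty))$, and then to integrate the result over $\Gamma$. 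Since $\Gamma$ is compact and $w$ does not depend on $\theta$, the resulting constant is automatically uniform. Note also that a test function $f\in C^\infty_0(\mathcal{C}\setminus B_r)$ restricts on each ray to a function compactly supported in the open interval $(r,\infty)$, so all boundary terms produced below vanish.

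The core is a completion-of-squares estimate. For an auxiliary function $\phi$ to be chosen, expanding $\int_r^\infty (f'+\phi f)^2 w\,dl\ge 0$ and integrating the cross term $\int (f^2)'\phi w\,dl$ by parts (with vanishing boundary terms) yields
\[
\int_r^\infty (f')^2 w\,dl\ \ge\ \int_r^\infty f^2\,w\Big(\phi'+\phi\,\tfrac{w'}{w}-\phi^2\Big)\,dl.
\]
Since $\tfrac{w'}{w}=-\tfrac{l}{2}+\tfrac{n-1}{l}$, the logarithmic derivative of the weight tends to $-\infty$, and taking $\phi(l)=-\tfrac14 l$ I compute $\phi'+\phi\,\tfrac{w'}{w}-\phi^2=\tfrac{l^2}{16}-\tfrac{n}{4}$. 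Hence for $l>r$ the bracketed factor is at least $\tfrac{r^2}{16}-\tfrac{n}{4}$, which exceeds any prescribed $C$ once $r\ge\sqrt{16C+4n}$. This delivers the one-dimensional inequality with the required constant, and integrating back over $\Gamma$ completes the proof.

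The one genuinely delicate point is the choice of $\phi$. It must be proportional to $l$ with slope $\beta\in(0,\tfrac12)$ in absolute value, so that the gain from the Gaussian term $\phi\,\tfrac{w'}{w}\sim\tfrac{\beta}{2}l^2$ strictly dominates the loss $-\phi^2=-\beta^2 l^2$; any such $\beta$ works, and the value $\beta=\tfrac14$ is chosen merely for definiteness. Everything else is elementary, and in particular the subleading contribution $\tfrac{n-1}{l}$ arising from the $l^{n-1}$ Jacobian of the cone is harmless, being dominated by the quadratically growing term for large $r$.
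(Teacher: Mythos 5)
Your proof is correct, and while it shares the paper's opening move (polar coordinates $\mathbf{x}=l\theta$, volume element $l^{n-1}\,dl\,d\sigma_\Gamma$, discarding the angular part of the gradient to reduce to a one-dimensional weighted inequality in the radial variable), the way you establish that one-dimensional inequality is genuinely different. The paper writes $f(\rho,\eta)=\int_r^\rho \partial_s f\,ds$, applies Cauchy--Schwarz against the weight, and then must verify that the resulting kernel $\int_r^\infty\bigl(\int_r^\rho e^{s^2/4}s^{1-n}\,ds\bigr)e^{-\rho^2/4}\rho^{n-1}\,d\rho$ can be made smaller than $1/C$, which requires an asymptotic comparison of $\int_r^\rho e^{s^2/4}s^{1-n}\,ds$ with $e^{\rho^2/4}\rho^{3-n}$. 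You instead expand $\int_r^\infty(f'+\phi f)^2w\,dl\ge 0$ with the linear multiplier $\phi=-l/4$ and integrate the cross term by parts; the identity $\phi'+\phi\,w'/w-\phi^2=l^2/16-n/4$ checks out, and it yields the inequality with the completely explicit threshold $r\ge\sqrt{16C+4n}$, with no asymptotic analysis needed. Your approach buys an effective constant and a one-line verification; the paper's buys a slightly more routine (if less quantitative) computation. One small point to make explicit: your integration by parts needs the boundary term $f(r)^2\phi(r)w(r)$ at $l=r$ to vanish (it has the unfavorable sign since $\phi(r)<0$), so you are relying on $f$ vanishing at $|\mathbf{x}|=r$ --- but this is exactly the same reading of $C^\infty_0(\mathcal{C}\backslash B_r)$ that the paper uses when it writes $f=\int_r^\rho\partial_s f\,ds$, so it is not a gap relative to the paper's own argument.
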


\begin{proof}
Suppose that $\mathcal{C}$ is a regular cone with vertex at $O$ generated by $\Gamma$,
where $\Gamma$ is a smooth closed embedded submanifold of $n$-dimensional unit sphere $\mathbb{S}^n$ with codimension one.
Then we use polar coordinates $(\rho, \eta)$, where $\rho=|\mathbf{x}|$ and $\eta\in\Gamma$.
For any $f\in C^\infty_0(\mathcal{C}\backslash B_r)$, $r>1$, set
\begin{equation}
f(\mathbf{x})=\int^{\rho}_r \frac{\partial f(s,\eta)}{\partial s} ds.
\end{equation}
Using Holder inequality, we deduce
\begin{equation}
\begin{aligned}
f^2(\mathbf{x}) &\le \Bigg( \int^\rho_r \Big|\frac{\partial f(s,\eta)}{\partial s} \Big| ds\Bigg)^2 \\
&\le \int^\rho_r \Big|\frac{\partial f(s,\eta)}{\partial s} \Big|^2 e^{-\frac{s^2}{4}}s^{n-1}d s
	\int^\rho_r e^{\frac{s^2}{4}} s^{1-n} d s \\
&\le \int^\infty_r |\nabla f( s,\eta)|^2 e^{-\frac{s^2}{4}}s^{n-1}ds
	\int^\rho_r e^{\frac{ s^2}{4}}\rho^{1-n} d s .
\end{aligned}
\end{equation}
Hence we have
\begin{equation}\label{r-0}
\begin{aligned}
\int_{\mathcal{C}\backslash B_r} f^2(\mathbf{x}) e^{-\frac{|\mathbf{\mathbf{x}}|^2}{4}}d\mu
&=\int_\Gamma \int^\infty_r f^2(\rho,\eta)e^{-\frac{\rho^2}{4}} \rho^{n-1}d\rho d\nu \\
& \leq \int^\infty_r \Bigg(\int_\Gamma\int^\infty_r |\nabla f(s,\eta)|^2 e^{-\frac{s^2}{4}}s^{n-1}ds d\nu \Bigg)
	\Bigg(\int^\rho_r e^{\frac{s^2}{4}}s^{1-n} ds\Bigg) e^{-\frac{\rho^2}{4}} \rho^{n-1}d\rho \\
&= \Bigg(\int_{\mathcal{C}\backslash B_r}|\nabla f(\mathbf{x})|^2 e^{-\frac{|\mathbf{x}|^2}{4}} d\mu\Bigg)
\Bigg[\int^\infty_r \Big(\int^\rho_r e^{\frac{s^2}{4}}s^{1-n} ds\Big) e^{-\frac{\rho^2}{4}} \rho^{n-1}d\rho\Bigg].
\end{aligned}
\end{equation}
It is easy to get
\begin{equation}
\lim_{\rho\to\infty} \frac{\int^\rho_r e^{\frac{s^2}{4}}s^{1-n} ds}{ e^{\frac{\rho^2}{4}} \rho^{3-n}}=0.
\end{equation}
It means that there exists a positive constant $r_1$ such that for $ r \geq r_1$,
\begin{equation}\label{r-1}
\frac{\int^\rho_r e^{\frac{s^2}{4}}s^{1-n} ds}{ e^{\frac{\rho^2}{4}} \rho^{1-n}} \le \rho^{-2}.
\end{equation}
Moreover, since $\rho^{-2}$ is integrable at infinity,  there exists $r_2$ such that for $ r \geq r_2$,
\begin{equation}\label{r-2}
\int^\infty_{r}\rho^{-2} d\rho\leq \frac{1}{C}.
\end{equation}
Therefore, combining  \eqref{r-1} with \eqref{r-2}, we know that when $r \geq \max\{r_1,r_2\}$,
\begin{equation}\label{r-3}
\int^\infty_r \big(\int^\rho_r e^{\frac{s^2}{4}}s^{1-n} ds\big) e^{-\frac{\rho^2}{4}} \rho^{n-1}d\rho \leq \frac{1}{C}.
\end{equation}
Using \eqref{r-0} and \eqref{r-3}, we can get \eqref{lemma3.1}.
This completes the proof of  Lemma \ref{lemma.A}.
\end{proof}

Using cylindrical coordinates and the same method as Lemma \ref{lemma.A}, we can obtain the following result for  generalized cylinders.

\begin{lemma} \label{lemma.D}
Let $\mathbb{S}^k({\sqrt{2k}})\times \mathbb{R}^{n-k}$ be a generalized cylinder. For any given positive constant $C$, there exists
a positive constant $r$ such that for all
$f\in C^\infty_0\big((\mathbb{S}^k({\sqrt{2k}})\times \mathbb{R}^{n-k})\backslash B_{r}\big)$,
\begin{equation}
\int_{\left(\mathbb{S}^k({\sqrt{2k}})\times \mathbb{R}^{n-k}\right)\backslash B_{r}}|\nabla f(\mathbf{x})|^2 e^{-\frac{|\mathbf{x}|^2}{4}} d\mu
\geq
C\int_{\left(\mathbb{S}^k({\sqrt{2k}})\times \mathbb{R}^{n-k}\right)\backslash B_{r}} f^2(\mathbf{x}) e^{-\frac{|\mathbf{x}|^2}{4}}d\mu,
\end{equation}
where $\nabla$ is the gradient operator on $\mathbb{S}^k({\sqrt{2k}})\times \mathbb{R}^{n-k}$.
\end{lemma}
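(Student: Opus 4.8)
The plan is to run the argument of Lemma~\ref{lemma.A} essentially verbatim, replacing polar coordinates by coordinates adapted to the product structure of the cylinder. I would write a point $\mathbf{x}$ on $\mathbb{S}^k(\sqrt{2k})\times\mathbb{R}^{n-k}$ as $(\omega,y)$ with $\omega\in\mathbb{S}^k(\sqrt{2k})$ and $y\in\mathbb{R}^{n-k}$, and introduce polar coordinates $t=|y|$, $\theta\in\mathbb{S}^{n-k-1}$ on the Euclidean factor. Two elementary observations drive the whole computation. First, since $|\omega|^2=2k$ is constant, $|\mathbf{x}|^2=2k+t^2$, so the weight splits as $e^{-|\mathbf{x}|^2/4}=e^{-k/2}\,e^{-t^2/4}$ and the region $\{|\mathbf{x}|>r\}$ is exactly $\{t>r'\}$ with $r':=\sqrt{r^2-2k}$ (taking $r>\sqrt{2k}$). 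Second, the induced measure factors as $d\mu=t^{n-k-1}\,dt\,d\theta\,d\sigma$, where $d\sigma$ is the measure on $\mathbb{S}^k(\sqrt{2k})$, and the flat radial field $\partial_t$ is a unit tangent vector, so $|\nabla f|^2\ge|\partial_t f|^2$. I deliberately use $t=|y|$ rather than $\rho=|\mathbf{x}|$ as the radial variable: on the cylinder the tangential part of $\mathbf{x}$ has length $t$, so $\big|\nabla^{\Sigma}|\mathbf{x}|\big|=t/|\mathbf{x}|\ne1$, whereas $\partial_t$ is genuinely unit speed and renders the weight purely Gaussian in $t$.

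With these coordinates fixed, I would freeze the angular variables $\omega$ and $\theta$ and write, for $f\in C^\infty_0$ supported in $\{t>r'\}$, $f=\int_{r'}^t\partial_s f\,ds$. Applying the weighted Cauchy--Schwarz inequality with weight $e^{-s^2/4}s^{n-k-1}$ exactly as in the proof of Lemma~\ref{lemma.A}, then integrating in $t$ over $(r',\infty)$ and in $\theta$ and $\omega$, the common factor $e^{-k/2}$ cancels from both sides and one reaches the cylinder analogue of \eqref{r-0}: the integral $\int f^2 e^{-|\mathbf{x}|^2/4}\,d\mu$ is bounded by $\int|\nabla f|^2 e^{-|\mathbf{x}|^2/4}\,d\mu$ times the one-dimensional quantity obtained from \eqref{r-0} upon replacing $n$ by $n-k$, namely $\int_{r'}^\infty\big(\int_{r'}^t e^{s^2/4}s^{1-(n-k)}\,ds\big)e^{-t^2/4}t^{n-k-1}\,dt$.

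It then remains to make this one-dimensional factor at most $1/C$ for large $r'$, which is the step corresponding to \eqref{r-1}--\eqref{r-3}. A L'H\^{o}pital computation gives $\lim_{t\to\infty}\big(\int_{r'}^t e^{s^2/4}s^{1-(n-k)}\,ds\big)\big/\big(e^{t^2/4}t^{3-(n-k)}\big)=0$, so for $r'$ large enough $\int_{r'}^t e^{s^2/4}s^{1-(n-k)}\,ds\le t^{-2}e^{t^2/4}t^{1-(n-k)}$; the outer Gaussian-times-$t^{n-k-1}$ weight then reduces the integrand to $t^{-2}$, and $\int_{r'}^\infty t^{-2}\,dt\le1/C$ once $r'$ is large. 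Translating back through $r=\sqrt{(r')^2+2k}$ produces the required $r$.

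The argument presents no genuine obstacle; the only points needing care are the two structural observations above---that the sphere factor contributes the fixed constant $2k$ to $|\mathbf{x}|^2$, producing the harmless factor $e^{-k/2}$, and that one must estimate with the flat radial derivative $\partial_t$ rather than $\partial_{|\mathbf{x}|}$---together with the degenerate case $n-k=1$, where $\mathbb{R}^{n-k}=\mathbb{R}$ has the two ends $t\ge r'$ and $t\le-r'$; these are handled symmetrically and the resulting estimates added.
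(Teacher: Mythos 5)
Your proposal is correct and is exactly the argument the paper intends: the paper's own ``proof'' is the single remark that one should use cylindrical coordinates and repeat the method of Lemma~\ref{lemma.A}, and you have carried that out faithfully (splitting off the constant $2k$ from $|\mathbf{x}|^2$, reducing to the radial variable $t=|y|$ with $n$ replaced by $n-k$, and rerunning the weighted Cauchy--Schwarz and decay estimates). Your extra care about using $\partial_t$ rather than $\partial_{|\mathbf{x}|}$ and about the case $n-k=1$ supplies details the paper omits.
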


For an asymptotically conical end of self-shrinkers, Wang \cite{Wa14} proved that
it can be given by the graph of a smooth function over a regular cone.
Moreover, Wang gave some gradient estimates of function defined on the
tangent spaces of regular cone with respect to the Euclidean metric (see Lemma 2.2 of \cite{Wa14}).
Here we give some sharper gradient estimates of functions defined on the regular cone with respect to the intrinsic metric in Lemma \ref{lemma.B}.
In some sense, our results are slightly different from results of Wang.

\begin{lemma} \label{lemma.B}
Let $\Sigma$ be an end of a smooth properly embedded self-shrinker in $\mathbb{R}^{n+1}$,
which is asymptotically conical to a regular cone $\mathcal{C}$.
There exists a positive constant $r$ such that, out of a compact set $K$, $\Sigma$ is given by the
graph of a smooth function $u(\cdot):\mathcal{C}\backslash B_{r}\to \mathbb{R}$.
Moreover, there exists a positive constants $C$  such that for $\mathbf{z}\in \mathcal{C}\backslash B_{R}$,
\begin{equation}\label{eq2.a}
|u(\mathbf{z})|\le C|\mathbf{z}|^{-1}
\end{equation}
and
\begin{equation} \label{eq2.b}
|\nabla u(\mathbf{z})| \le C|\mathbf{z}|^{-2},
\end{equation}
where $\nabla$ is the gradient operator on $\mathcal{C}$.
\end{lemma}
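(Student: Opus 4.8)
The plan is to treat $u$ as the solution of an elliptic equation over the cone and to bootstrap its decay, the crude input coming from the asymptotic convergence and the actual rates from the self-shrinker equation itself. First I would record the graph representation: by the asymptotically conical hypothesis and the standard graph-over-limit construction (as in \cite{Wa14}), outside a compact set $K$ the end $\Sigma$ is the normal graph $\mathbf{x}\mapsto \mathbf{x}+u(\mathbf{x})\mathbf{n}_{\mathcal{C}}(\mathbf{x})$ of a smooth $u:\mathcal{C}\setminus B_r\to\mathbb{R}$, and the $C^k$-convergence of $\lambda\Sigma$ to $\mathcal{C}$ yields, after rescaling the convergence on unit-size annuli, the crude a priori bounds $u=o(|\mathbf{x}|)$, $|\nabla u|=o(1)$ and $|\nabla^2 u|=o(|\mathbf{x}|^{-1})$. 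Their only purpose is to start an iteration and to guarantee that $u\to 0$, which is what will let me discard the linearly growing mode below.

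The key structural observation is that the blow-down cone $\mathcal{C}$ is a minimal cone: the position vector is tangent to $\mathcal{C}$, so $\langle\mathbf{x},\mathbf{n}_{\mathcal{C}}\rangle=0$, while the degenerating shrinker equation on $\lambda\Sigma$ forces $H_{\mathcal{C}}=0$, whence $\mathcal{C}$ is itself a (singular) self-shrinker. Writing the self-shrinker equation $H+\tfrac12\langle\mathbf{X},\mathbf{N}\rangle=0$ for the graph and linearizing at $u\equiv 0$ then produces exactly the Jacobi operator of the Introduction restricted to the cone,
\[
L_{\mathcal{C}}u=\Delta_{\mathcal{C}}u-\tfrac12\langle\mathbf{x},\nabla u\rangle+\bigl(|A_{\mathcal{C}}|^2+\tfrac12\bigr)u=Q(u,\nabla u,\nabla^2 u),
\]
where the error $Q$ is a sum of terms at least quadratic in $(u,\nabla u,\nabla^2 u)$ with coefficients bounded by powers of $|A_{\mathcal{C}}|=O(|\mathbf{x}|^{-1})$; crucially there is \emph{no} inhomogeneous term, precisely because $\mathcal{C}$ solves the equation.

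Next I would analyze $L_{\mathcal{C}}$ through its dominant part at large $\rho=|\mathbf{x}|$, the drift $-\tfrac{\rho}{2}\partial_\rho+\tfrac12$. Solving $-\tfrac{\rho}{2}\partial_\rho u+\tfrac12 u=g$ and discarding the linearly growing homogeneous solution (legitimate since $u\to 0$) gives the decaying particular solution $u(\rho)=2\rho\int_\rho^\infty g(s)s^{-2}\,ds$, so a forcing decaying like $\rho^{-b}$ yields $u=O(\rho^{-b})$. Feeding the crude bounds into $Q$ gives $Q=o(1)$, hence $u=o(1)$; iterating, the quadratic nature of $Q$ together with the $\rho^{-1}$ decay of $|A_{\mathcal{C}}|$ improves the exponent at every step, so after finitely many steps $|u(\mathbf{z})|\le C|\mathbf{z}|^{-1}$, with interior Schauder estimates on rescaled unit balls upgrading this to the derivative bounds. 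Passing finally to the intrinsic cone metric $d\rho^2+\rho^2 g_\Gamma$, angular derivatives carry an extra factor $\rho^{-1}$, so the intrinsic gradient gains one power over $u$ and one obtains $|\nabla u(\mathbf{z})|\le C|\mathbf{z}|^{-2}$.

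I expect the main obstacle to lie in the second and third steps: setting up the nonlinear graph equation correctly and verifying that the shrinker drift genuinely dominates, so that the decaying particular solution inherits the decay of the forcing, while making the bootstrap rigorous through rescaled interior elliptic estimates that convert $C^0$-decay of $u$ into decay of $\nabla u$ and $\nabla^2 u$ with the correct powers. A shorter alternative, in the spirit of the lead-in, is to take the Euclidean-metric estimates for $u$ from Lemma 2.2 of \cite{Wa14} as given and to carry out only the metric conversion of the last paragraph; the work then reduces to carefully tracking powers of $\rho$ when relating the ambient derivative to the intrinsic gradient on $\mathcal{C}$.
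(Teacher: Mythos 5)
Your main route has a genuine error at its structural core. You assert that the blow-down cone $\mathcal{C}$ must be minimal (``the degenerating shrinker equation on $\lambda\Sigma$ forces $H_{\mathcal{C}}=0$, whence $\mathcal{C}$ is itself a (singular) self-shrinker'') and that consequently the graph equation $L_{\mathcal{C}}u=Q$ has no inhomogeneous term. This is false: the shrinker equation on $\lambda\Sigma$ reads $H_{\lambda\Sigma}=-\tfrac{1}{2\lambda^{2}}\langle \mathbf{y},\mathbf{n}\rangle$, which in the limit only recovers $\langle\mathbf{x},\mathbf{n}_{\mathcal{C}}\rangle=0$ (automatic for a cone) and places no constraint on $H_{\mathcal{C}}$; there are well-known self-shrinking ends asymptotic to non-minimal cones (e.g.\ the rotationally symmetric examples of Kleene--M{\o}ller). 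The correct equation is $L_{\mathcal{C}}u=-H_{\mathcal{C}}+Q$ with $H_{\mathcal{C}}=O(\rho^{-1})$, and it is precisely this inhomogeneous term that sets the rate $u=O(\rho^{-1})$; your ``purely quadratic $Q$, gain a power at each step'' mechanism is not how the decay arises. Moreover, even granting the setup, the bootstrap as described does not close: interior Schauder estimates on unit-scale balls convert $C^{0}$-decay of $u$ into the \emph{same} decay for $\nabla^{2}u$, not two powers better, so the $\Delta_{\mathcal{C}}u$ term you fold into the forcing $g$ decays no faster than $u$ itself and the drift ODE returns no improvement. Making this scheme rigorous requires genuinely more (weighted estimates for the drift Laplacian, or the parabolic reformulation), which is exactly why the paper does not argue this way.

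The alternative you mention in your last paragraph is essentially the paper's actual proof, but you understate what it involves. The paper takes from Lemma 2.2 of \cite{Wa14} the uniform estimates $|\partial_t\tilde v|\le C|\mathbf{z}_0|^{-1}$ and $|D\partial_t\tilde v|\le C|\mathbf{z}_0|^{-2}$ for the graph functions $\tilde v(\cdot,t)$ of the family $\Sigma_t=\sqrt{t}\,\Sigma$ over tangent planes of $\mathcal{C}$, and obtains \eqref{eq2.a} and the bound $|D\tilde v(\cdot,1)|\le C|\mathbf{z}_0|^{-2}$ by integrating in $t$ from the blow-down limit $t\to 0^{+}$ (where $v$ and $Dv$ vanish). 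The remaining work is the change of graphing base from $T_{\mathbf{z}_0}\mathcal{C}$ to $\mathcal{C}$ itself, using $|D^{2}\tilde w|\le C|\mathbf{z}_0|^{-1}$ for the cone's local graph over its tangent plane, to pass from $D\tilde v$ to the intrinsic gradient $\nabla u$. So the content is the $t$-integration of Wang's estimates plus this conversion, not merely ``tracking powers of $\rho$ between ambient and intrinsic derivatives.'' As submitted, your proposal has a genuine gap.
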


\begin{proof}
Let $\Sigma_t=\sqrt{t}\Sigma, 0<t\le 1$.
According to  Lemma 2.2 in \cite{Wa14},
there exists a positive constant $r_1$ such that, out of compact subset $K_t$,
$\Sigma_t$ is given by the graph of a smooth function
$v(\cdot,t):\mathcal{C}\backslash B_{r_1}\to \mathbb{R}$.
Moreover, for any $\mathbf{z}_0\in \mathcal{C}\backslash B_{r_1}$, if $\Sigma_t$ is locally
written as the graph of a smooth function $\tilde{v}(\cdot,t)$ on $B_1(\mathbf{z}_0)\cap T_{\mathbf{z}_0}\mathcal{C}$,
there exists a positive constant $C_1$ such that the following uniform estimates hold
\begin{equation} \label{eq2.1}
|D \tilde{v}|\le C_1,
\end{equation}
\begin{equation} \label{eq2.1-a}
 |\partial_t \tilde{v}|\le C_1|\mathbf{z}_0|^{-1}
\end{equation}
and
\begin{equation} \label{eq2.1-b}
    |D\partial_t \tilde{v}|\le C_1|\mathbf{z}_0|^{-2}.
\end{equation}
Here $D$ is the Euclidean gradient on $T_{\mathbf{z}_0}\mathcal{C}$, and $\partial_t$ denotes the
partial derivative with respect to $t$ at the points in $T_{\mathbf{z}_0}\mathcal{C}$.
Set $u(\mathbf{z})=v(\mathbf{z},1)$.
Hence there exists a positive constant $r_1$ such that, out of a compact set $K$, $\Sigma$ is given by the
graph of a smooth function $u(\cdot):\mathcal{C}\backslash B_{r_1}\to \mathbb{R}$.

Since $\Sigma$ is asymptotically conical to a regular cone $\mathcal{C}$, we have
\begin{equation} \label{eq2.3}
\lim_{t\to 0^+}|v|=0   \quad \mbox{and}  \quad \lim_{t\to 0^+}|\nabla v|=0.
\end{equation}
Moreover, since $v(\mathbf{z}_0,t)=\tilde{v}(\mathbf{z}_0,t)$, we get
\begin{equation}
|u(\mathbf{z}_0)| = \Big|\int_0^1 \partial_t \tilde{v}(\mathbf{z}_0,t) dt\Big| \le \int_0^1 |\partial_t \tilde{v}(\mathbf{z}_0,t)|dt \le C_1|\mathbf{z}_0|^{-1}.
\end{equation}
That is to say, \eqref{eq2.a} is true.

It is obvious that there exists a positive constant $r_2$ such that
for $\mathbf{z}_0\in \mathcal{C}\backslash B_{r_2}$,
$\mathcal{C}$ can be locally written as the graph of a smooth function
$\tilde{w}$ on $B_1(\mathbf{z}_0)\cap T_{\mathbf{z}_0}\mathcal{C}$,
and there exists a positive constant $C_2$ such that
\begin{equation} \label{eq2.2}
|D^2 \tilde{w}|\le C_2|\mathbf{z}_0|^{-1}.
\end{equation}

We parametrize $T_{\mathbf{z}_0}\mathcal{C}$ by
\begin{equation*}
\begin{aligned}
&F:\mathbb{R}^n\to T_{\mathbf{z}_0}\mathcal{C}\\
&F(p)=\mathbf{z}_0+\sum_i p_i \mathbf{e}_i,
\end{aligned}
\end{equation*}
where $p=(p_1,\cdots,p_n)$, $\{\mathbf{e}_1,\cdots,\mathbf{e}_n\}$
is an orthonormal basis of $T_{\mathbf{z}_0}\mathcal{C}-\mathbf{z}_0$, and $\mathbf{e}_{n+1}$
is the unit normal vector of $T_{\mathbf{z}_0}\mathcal{C}$.
For $p,q\in \mathbb{R}^n$, we respectively identify $\tilde{v}(p,t)$, $\tilde{w}(p)$ and $v(p,t)$
with $\tilde{v}(F(p),t)$, $\tilde{w}(F(p))$ and ${v}(F(p)+\tilde{w}(F(p))\mathbf{e}_{n+1},t)$ .
Note that the unit normal $\mathbf{n}$ of $\mathcal{C}$ at  point $F(p)+\tilde{w}(p)\mathbf{e}_{n+1}$
is given by
\begin{equation}
\mathbf{n}(p)=\sum_i \frac{-\partial_i \tilde{w}}{\sqrt{1+|D\tilde{w}|^2}}\mathbf{e}_i+\frac{1}{\sqrt{1+|D\tilde{w}|^2}}\mathbf{e}_{n+1}.
\end{equation}
Thus we have
\begin{equation}\label{eq2.c}
q_i =p_i-\frac{\partial_i \tilde{w}}{\sqrt{1+|D\tilde{w}|^2}}v
\end{equation}
and
\begin{equation} \label{eq2.d}
\tilde{v}(q,t)=\tilde{w}(p)+\frac{1}{\sqrt{1+|D\tilde{w}|^2}}v.
\end{equation}
Differentiating \eqref{eq2.c} and \eqref{eq2.d} with respect to $p_i$, we obtain
\begin{equation}\label{eq2.e}
 \partial_iq_j =\delta_{ji}-\frac{\partial_{ji}^2\tilde{w}}{\sqrt{1+|D\tilde{w}|^2}}v
	+\frac{\partial_j\tilde{w}}{(1+|D\tilde{w}|)^{3/2}}\bigg(\sum_l\partial_l\tilde{w}\partial_{li}^2\tilde{w} \bigg)v
	-\frac{\partial_j\tilde{w}}{\sqrt{1+|D\tilde{w}|^2}}\partial_i v
\end{equation}
and
\begin{equation} \label{eq2.f}
\sum_k\partial_k\tilde{v}\partial_i q_k = \partial_i\tilde{w}+\frac{\partial_i v}{\sqrt{1+|D\tilde{w}|^2}}
	-\frac{\sum_l \partial_l\tilde{w}\partial^2_{li}\tilde{w}}{(1+|D\tilde{w}|)^{3/2}}v .
\end{equation}
Then we deduce
\begin{equation}
\begin{aligned} \label{eq2.g}
\frac{1+\sum_k\partial_k\tilde{v}\partial_k\tilde{w}}{\sqrt{1+|D\tilde{w}|^2}}\partial_i v
=\partial_i\tilde{v}-\partial_i\tilde{w}-\frac{\sum_k\partial_k\tilde{v}\partial^2_{ki}\tilde{w}}{\sqrt{1+|D\tilde{w}|^2}}v
+\frac{1+\sum_k\partial_k\tilde{v}\partial_k\tilde{w}}{(1+|D\tilde{w}|)^{3/2}}\bigg(\sum_l\partial_l\tilde{w}\partial^2_{li}\tilde{w}\bigg) v .
\end{aligned}
\end{equation}
Calculating \eqref{eq2.g} at $p=0$, noticing that $D\tilde{w}(0)=0$ and $q(0)=0$, we know that at point $\mathbf{z}_0$,
\begin{equation} \label{eq2.h}
\partial_iv=\partial_i\tilde{v}-\bigg(\sum_k\partial_k\tilde{v}\partial^2_{ki}\tilde{w}\bigg) v.
\end{equation}
It is easy to find that the metric $g=(g_{ij})$ on $\mathcal{C}$  is determined by the coefficients
\begin{equation}
 g_{ij}=\delta_{ij}+\partial_i\tilde{w}\partial_j\tilde{w},
 \end{equation}
and the inverse  $g^{-1}=(g^{ij})$ of $g$ is determined by the coefficients
 \begin{equation}\label{eq2.27}
 g^{ij}=\delta_{ij}-\frac{\partial_i\tilde{w}\partial_j\tilde{w}}{1+|D\tilde{w}|^2}.
\end{equation}
Therefore, using \eqref{eq2.h} and \eqref{eq2.27}, we know that at point $\mathbf{z}_0$,
\begin{equation}
\begin{aligned} \label{eq2.i}
|\nabla v|^2 &=g^{ij}\partial_i v\partial_j v \\
&= |D\tilde{v}|^2+\sum_i\bigg(\sum_k\partial_k\tilde{v}\partial^2_{ki}\tilde{w}\bigg)^2 v^2
	-2\bigg(\sum_{ik} \partial_i\tilde{v}\partial_k\tilde{v}\partial^2_{ki}\tilde{w}\bigg) v.
\end{aligned}
\end{equation}

Let $t\to 0^+$ at point $\mathbf{z}_0$ in \eqref{eq2.i}. Then we can find that  the left hand side of \eqref{eq2.i} goes to $0$,
the second term and the third term on the right hand side of \eqref{eq2.i} also go to $0$ due to \eqref{eq2.1} and \eqref{eq2.3}.
Hence we obtain
\begin{equation}
\lim_{t\to 0^+} |D\tilde{v}(\mathbf{z}_0,t)|=0
\end{equation}
and
\begin{equation} \label{eq2.4}
|D\tilde{v}(\mathbf{z}_0,1)|\le \sqrt{n}\int_0^1\big|\partial_t D\tilde{v}(\mathbf{z}_0,t)\big|dt \le \sqrt{n}C_1|\mathbf{z}_0|^{-2}.
\end{equation}
Putting $t=1$ in \eqref{eq2.i}, we find that at point $\mathbf{z}_0$,
\begin{equation} \label{eq2.j}
|\nabla u|^2 = |D\tilde{v}|^2+\sum_i\bigg(\sum_k\partial_k\tilde{v}\partial^2_{ki}\tilde{w}\bigg)^2 u^2
	-2\bigg(\sum_{ik} \partial_i\tilde{v}\partial_k\tilde{v}\partial^2_{ki}\tilde{w}\bigg) u.
\end{equation}
From \eqref{eq2.2} and \eqref{eq2.4},
we can observe that all terms on the right hand side of \eqref{eq2.j}
are bounded by $|\mathbf{z}_0|^{-4}$ multiplied by a constant.
Thus  \eqref{eq2.b} is true. This concludes the proof of Lemma \ref{lemma.B}.
\end{proof}

Now we consider asymptotically conical ends and asymptotically cylindrical
ends of smooth properly embedded self-shrinkers in $ \mathbb{R}^{n+1}$ and establish two integral inequalities, which will play important role in the proof of Theorem \ref{thm.1}.

\begin{lemma} \label{lemma.C}
Let $\Sigma$ be an asymptotically
conical end of  a smooth properly embedded self-shrinker in $\mathbb{R}^{n+1}$.
For any given positive constant $C$, there exists a positive constant
$r$ such that for all $f\in C^\infty_0(\Sigma\backslash B_{r})$, we have
\begin{equation}\label{eq2.32}
C\int_{\Sigma\backslash B_{r}} f^2(\mathbf{x}) e^{-\frac{|\mathbf{x}|^2}{4}}d\mu
	 \le \int_{\Sigma\backslash B_{r}}|\nabla f(\mathbf{x})|^2 e^{-\frac{|\mathbf{x}|^2}{4}} d\mu,
\end{equation}
where $\nabla$ is the gradient operator on $\Sigma$.
\end{lemma}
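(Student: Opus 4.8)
The plan is to transplant onto the end $\Sigma$ the weighted Poincar\'e inequality \eqref{lemma3.1} that Lemma \ref{lemma.A} already provides on the model cone $\mathcal{C}$, transferring it through the graph representation of Lemma \ref{lemma.B}. By that lemma there are $r_0>0$ and a smooth $u:\mathcal{C}\backslash B_{r_0}\to\mathbb{R}$ whose graph is the end $\Sigma$; writing $\mathbf{n}_{\mathcal{C}}$ for the unit normal of $\mathcal{C}$, I consider the diffeomorphism onto the end
\[
\Phi:\mathcal{C}\backslash B_{r_0}\longrightarrow\Sigma,\qquad \Phi(\mathbf{z})=\mathbf{z}+u(\mathbf{z})\,\mathbf{n}_{\mathcal{C}}(\mathbf{z}).
\]
Given $f\in C_0^\infty(\Sigma\backslash B_r)$ I set $\tilde f=f\circ\Phi$ and rewrite both integrals in \eqref{eq2.32} as integrals over $\mathcal{C}$ against the cone weight $e^{-|\mathbf{z}|^2/4}$. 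The entire argument then reduces to showing that this change of variables distorts the three relevant quantities — the conformal weight, the volume element, and the length of the gradient — by factors that tend to $1$ as $r\to\infty$, so that Lemma \ref{lemma.A}, applied with a constant slightly larger than $C$, survives the transfer.

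For the weight I use that the vertex of $\mathcal{C}$ sits at $O$, so the position vector $\mathbf{z}$ is tangent to $\mathcal{C}$ and hence orthogonal to $\mathbf{n}_{\mathcal{C}}$; this yields $|\Phi(\mathbf{z})|^2=|\mathbf{z}|^2+u(\mathbf{z})^2$, and the bound $|u|\le C|\mathbf{z}|^{-1}$ from \eqref{eq2.a} gives $e^{-|\Phi(\mathbf{z})|^2/4}=e^{-|\mathbf{z}|^2/4}e^{-u^2/4}$ with $e^{-u^2/4}\in[1-\varepsilon,1]$ once $|\mathbf{z}|\ge r$ is large. For the metric and the volume I differentiate $\Phi$ to get $d\Phi(v)=(I-uS)v+\langle\nabla u,v\rangle\,\mathbf{n}_{\mathcal{C}}$, where $S$ is the shape operator of $\mathcal{C}$; since $\mathcal{C}$ is a cone its second fundamental form satisfies $|S|\le C|\mathbf{z}|^{-1}$ (which is exactly the content of \eqref{eq2.2}), so together with \eqref{eq2.a} and \eqref{eq2.b} the pulled-back metric obeys
\[
\Phi^*g_\Sigma=g_{\mathcal{C}}+E,\qquad |E|_{g_{\mathcal{C}}}\le C|\mathbf{z}|^{-2}.
\]
Consequently the eigenvalues of $\Phi^*g_\Sigma$ relative to $g_{\mathcal{C}}$, the Jacobian $J=\sqrt{\det(\Phi^*g_\Sigma/g_{\mathcal{C}})}$, and the inverse metric all lie within $\varepsilon$ of their cone values once $r$ is large, whence $J\in[1-\varepsilon,1+\varepsilon]$ and $|\nabla_\Sigma f|^2\circ\Phi=(\Phi^*g_\Sigma)^{ij}\partial_i\tilde f\,\partial_j\tilde f\ge(1-\varepsilon)|\nabla_{\mathcal{C}}\tilde f|^2$.

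Assembling the three comparisons, the gradient integral in \eqref{eq2.32} is at least $(1-\varepsilon)^3\int_{\mathcal{C}}|\nabla_{\mathcal{C}}\tilde f|^2 e^{-|\mathbf{z}|^2/4}d\mu$, while the mass integral is at most $(1+\varepsilon)\int_{\mathcal{C}}\tilde f^2 e^{-|\mathbf{z}|^2/4}d\mu$. Because $\big||\Phi(\mathbf{z})|-|\mathbf{z}|\big|\le C|\mathbf{z}|^{-3}$, the support of $\tilde f$ lies in $\mathcal{C}\backslash B_{r-1}$, so I may invoke Lemma \ref{lemma.A} on the cone with constant $C':=2C$ to find $r_1$ beyond which $\int_{\mathcal{C}}|\nabla_{\mathcal{C}}\tilde f|^2 e^{-|\mathbf{z}|^2/4}d\mu\ge C'\int_{\mathcal{C}}\tilde f^2 e^{-|\mathbf{z}|^2/4}d\mu$. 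Chaining the estimates produces the constant $(1-\varepsilon)^3C'/(1+\varepsilon)$ in front of the mass integral of $f$ on $\Sigma$, and this exceeds $C$ as soon as $\varepsilon$ — hence $1/r$ — is small enough. The one delicate point is the metric comparison: it is essential that the error $E$ decay like $|\mathbf{z}|^{-2}$, which rests on the sharp intrinsic gradient bound $|\nabla u|\le C|\mathbf{z}|^{-2}$ of \eqref{eq2.b} together with the cone curvature bound $|S|\le C|\mathbf{z}|^{-1}$; a purely Euclidean estimate on the gradient of $u$ would leave an $O(|\mathbf{z}|^{-1})$ error and would not close the argument. The remaining work is a routine bookkeeping of the three distortion factors.
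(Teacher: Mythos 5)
Your proposal is correct and follows essentially the same route as the paper: transfer the weighted Poincar\'e inequality of Lemma \ref{lemma.A} from the cone to the end through the graph map of Lemma \ref{lemma.B}, controlling the distortion of the metric, volume element, gradient, and Gaussian weight via the decay estimates \eqref{eq2.a}, \eqref{eq2.b} and $|A|\le C|\mathbf{z}|^{-1}$, then absorb the resulting factors by invoking Lemma \ref{lemma.A} with a slightly enlarged constant. Your exact identity $|\Phi(\mathbf{z})|^2=|\mathbf{z}|^2+u(\mathbf{z})^2$ handles the weight a bit more cleanly than the paper's cruder bound $\bigl||\Pi^{-1}(\mathbf{x})|-|\mathbf{x}|\bigr|\le 1$, but this is a cosmetic difference, not a different argument.
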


\begin{proof}
Assume that $\Sigma$ is asymptotically conical to a regular cone $\mathcal{C}$.
It follows form Lemma \ref{lemma.B} that there exists a positive constant $r_1$ such that, outside of a compact set $K$,
$\Sigma$ is given by the graph of function $u:\mathcal{C}\backslash B_{r_1}\to \mathbb{R}$.
Hence we define a differential isomorphism $\Pi:\mathcal{C}\backslash B_{r_1}\to \Sigma\backslash K$
as
$$\Pi(\mathbf{z})=\mathbf{z}+u(\mathbf{z})\mathbf{n}(\mathbf{z}), $$
where $\mathbf{n}$ is the unit normal vector of $\mathcal{C}$.
For any $\mathbf{z}_0\in \mathcal{C}\backslash B_{r}$, where $r\ge r_1$ is a positive constant to be
determined later.
Let $\Omega$ be a open domain containing $0$ in $\mathbb{R}^{n+1}$
and $A$ be the second fundamental form of $\mathcal{C}$.
In a neighborhood of $\mathbf{z}_0$, we choose a local coordinate $F:\Omega\to\mathcal{C}$ satisfying
$$F(0)=\mathbf{z}_0,$$
$$\langle\partial_i F(0),\partial_j F(0) \rangle=\delta_{ij}$$
and
$$h_{ij}=A(\partial_i F,\partial_j F),$$
with $h_{ij}(0)=0$ if $i\ne j$.
Thus, in a neighborhood of $\mathbf{x}_0=\Pi(\mathbf{z}_0)$,
there exists a local coordinate $\tilde{F}:\Omega\to\Sigma$  given by
\begin{equation}
\tilde{F}(p)=\Pi\big(F(p)\big)=F(p)+u(p)\mathbf{n}(p).
\end{equation}
Here we identify $u(p)$ with $u(F(p))$, and identify $\mathbf{n}(p)$ with $\mathbf{n}(F(p))$.
Hence the tangent vectors $\partial_i\tilde{F}$,$i=1,\cdots,n$ of $\Sigma$ are given by
\begin{equation}
\begin{aligned}
\partial_i\tilde{F}=&d\Pi(\partial_i F)\\
=&\partial_i F+(\partial_iu) \mathbf{n} + u(\partial_i \mathbf{n}).
\end{aligned}
\end{equation}
Note that at $p=0$, $\partial_i \mathbf{n}(0)=-h_{ii}\partial_i F(0)$.
Thus it yields
\begin{equation}
\partial_i\tilde{F}=\big(1-h_{ii}\big)\partial_iF+\big(\partial_i u\big)\mathbf{n}.
\end{equation}
Therefore, at $p=0$, the metric  $g=(g_{ij})$ of $\Sigma$ is given by
\begin{equation}\label{eq2.k}
\begin{aligned}
g_{ij} =&\langle\partial_i\tilde{F},\partial_j\tilde{F}\rangle\\
=&(1-h_{ii}u)(1-h_{jj}u)\delta_{ij}+\partial_iu\partial_ju.
\end{aligned}
\end{equation}

According to Lemma \ref{lemma.B}, there exists a constant $C_1$ such that
\begin{equation} \label{eq2.l}
|u(\mathbf{z}_0)|\le C_1|\mathbf{z}_0|^{-1}
\end{equation}
and
\begin{equation} \label{eq2.l-a}
 |\nabla^\mathcal{C}u(\mathbf{z}_0)|\le C_1|\mathbf{z}_0|^{-2},
\end{equation}
where $\nabla^\mathcal{C}$ is the gradient operator on $\mathcal{C}$.
On the other hand, it is obvious that there exist two positive constants $r_2$ and $C_2$
such that for $\mathbf{z}_0\in \mathcal{C}\backslash B_{r_2}$,
\begin{equation}\label{eq2.m}
|A(\mathbf{z}_0)|\le C_2|\mathbf{z}_0|^{-1}.
\end{equation}
Using  \eqref{eq2.k}, \eqref{eq2.l},\eqref{eq2.l-a} and \eqref{eq2.m}, for any $0<\epsilon<1/2$,
we can choose a positive constant $r_3>\max\{r_1,r_2\}$
such that the metric $g=(g_{ij})$ and the inverse  $g^{-1}=(g^{ij})$, at $p=0$,
can be determined by the coefficients
\begin{equation} \label{eq2.n}
g_{ij}=\delta_{ij}+b_{ij}
\end{equation}
and
\begin{equation} \label{eq2.n-b}
 g^{ij}=\delta_{ij}+d_{ij},
\end{equation}
where $B=(b_{ij})$ and $ D=(d_{ij})$ are two small matrices with
$|b_{ij}|\le \epsilon,|d_{ij}|\le\epsilon$
and $1-\epsilon\le\det(g_{ij})\le 1+\epsilon$.

For any $f\in C^\infty (\Sigma)$, at point $\mathbf{x}_0=\Pi(\mathbf{z}_0)$, we have
\begin{equation}
\begin{aligned}
|\nabla f(\mathbf{x}_0)|^2 &=g^{ij}\partial_i\big(f\circ \tilde{F}(0)\big)\partial_j\big(f\circ \tilde{a}(0)\big) \\
&= \delta_{ij} \partial_i\big((f\circ \Pi) \circ F(0)\big)\partial_j\big((f\circ\Pi) \circ F(0)\big) \\
&\quad +d_{ij}\partial_i\big((f\circ \Pi) \circ F(0)\big)\partial_j\big((f\circ\Pi) \circ F(0)\big) \\
&\ge |\nabla^\mathcal{C}f\circ\Pi(\mathbf{z}_0)|^2 -\epsilon |\nabla^\mathcal{C}f\circ\Pi(\mathbf{z}_0)|^2 \\
&\ge \frac{1}{2} |\nabla^\mathcal{C}f\circ\Pi(\mathbf{z}_0)|^2 .
\end{aligned}
\end{equation}
On the other hand, due to \eqref{eq2.l}, we can choose $r_4\ge r_3+1$
such that for $\mathbf{x}_0\in \Sigma\backslash B_{r_4}$,
\begin{equation}
\big| |\Pi^{-1}(\mathbf{x}_0)|-|\mathbf{x}_0|\big|\le 1.
\end{equation}
Therefore, there exist two positive constants $C_3$ and $C_4$ such that for any $f\in C^\infty_0(\Sigma\backslash B_{r_4})$,
\begin{equation}
\int_{\Sigma\backslash B_{r_4}} |\nabla f(\mathbf{x})|^2 e^{-\frac{|\mathbf{x}|^2}{4}}d\mu
\ge C_3 \int_{\mathcal{C}\backslash B_{r_4-1}}|\nabla^\mathcal{C} f\circ\Pi(\mathbf{x})|^2 e^{-\frac{|\mathbf{x}|^2}{4}}d\mu
\end{equation}
and
\begin{equation}
\int_{\mathcal{C}\backslash B_{r_4-1}}|f\circ\Pi(\mathbf{x})|^2 e^{-\frac{|\mathbf{x}|^2}{4}}d\mu
\ge C_4 \int_{\Sigma\backslash B_{r_4}}|f(\mathbf{x})|^2 e^{-\frac{|\mathbf{x}|^2}{4}}d\mu .
\end{equation}
Applying Lemma \ref{lemma.A} with the constant $CC_3^{-1}C_4^{-1}$, we know that there exists a positive constant $r_5\ge r_4$ such that
for any $f\in C^\infty_0(\Sigma\backslash B_{r_5})$,
\begin{equation}
\begin{aligned}
\int_{\Sigma\backslash B_{r_5}} |\nabla f(\mathbf{x})|^2 e^{-\frac{|\mathbf{x}|^2}{4}}d\mu
&\ge C_3 \int_{\mathcal{C}\backslash B_{r_5-1}}|\nabla^\mathcal{C} f\circ\Pi(\mathbf{x})|^2 e^{-\frac{|\mathbf{x}|^2}{4}}d\mu \\
&\ge C_3(CC_3^{-1}C_4^{-1}) \int_{\mathcal{C}\backslash B_{r_5-1}}| f\circ\Pi(\mathbf{x})|^2 e^{-\frac{|\mathbf{x}|^2}{4}}d\mu \\
&\ge C_3(CC_3^{-1}C_4^{-1})C_4 \int_{\Sigma\backslash B_{r_5}}|f(\mathbf{x})|^2 e^{-\frac{|\mathbf{x}|^2}{4}}d\mu \\
& = C\int_{\Sigma\backslash B_{r_5}}|f(\mathbf{x})|^2 e^{-\frac{|\mathbf{x}|^2}{4}}d\mu.
\end{aligned}
\end{equation}
Hence \eqref{eq2.32} is true. This ends  the proof of Lemma \ref{lemma.C}.
\end{proof}

\begin{lemma} \label{lemma.E}
Let $\Sigma$ be an asymptotically cylindrical
end of a smooth properly embedded self-shrinker in $ \mathbb{R}^{n+1}$.
For any given positive constant $C$, there exists a positive constant
$r$ such that for all $f\in C^\infty_0\big((\mathbb{S}^k({\sqrt{2k}})\times \mathbb{R}^{n-k})\backslash B_{r}\big)$,
\begin{equation}\label{eq-lemma.E}
C\int_{\Sigma\backslash B_{r}} f^2(\mathbf{x}) e^{-\frac{|\mathbf{x}|^2}{4}}d\mu
	 \leq \int_{\Sigma\backslash B_{r}}|\nabla f(\mathbf{x})|^2 e^{-\frac{|\mathbf{x}|^2}{4}} d\mu,
\end{equation}
where $\nabla$ is the gradient operator on $\Sigma$.
\end{lemma}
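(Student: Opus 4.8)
The plan is to follow the architecture of the proof of Lemma \ref{lemma.C} line by line, with the regular cone $\mathcal{C}$ replaced by the generalized cylinder $\mathcal{N}:=\mathbb{S}^k(\sqrt{2k})\times\mathbb{R}^{n-k}$ and with Lemma \ref{lemma.A} replaced by Lemma \ref{lemma.D}. The argument rests on three ingredients: realizing the end as a graph over $\mathcal{N}$ with uniformly small graphical data, comparing the induced metric on $\Sigma$ with the product metric on $\mathcal{N}$, and comparing the Gaussian weights under the graph map. Once these are established, Lemma \ref{lemma.D} supplies the analytic content.

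First I would use the asymptotic cylindricity to write, outside a compact set, the end $\Sigma$ as the graph of a smooth function $u$ over $\mathcal{N}\backslash B_{r_1}$, and set $\Pi(\mathbf{z})=\mathbf{z}+u(\mathbf{z})\mathbf{n}(\mathbf{z})$, where $\mathbf{n}$ is the unit normal of $\mathcal{N}$. The point at which the cylindrical case genuinely differs from the conical one is the estimate on the graphical data. Since $\Sigma-\lambda\xi$ converges smoothly on compact sets to $\mathcal{N}$ as $\lambda\to\infty$, I claim that for every $\epsilon\in(0,1/2)$ there is an $r_2\ge r_1$ with $|u(\mathbf{z})|\le\epsilon$ and $|\nabla^{\mathcal{N}}u(\mathbf{z})|\le\epsilon$ for all $\mathbf{z}\in\mathcal{N}\backslash B_{r_2}$; this is the cylindrical analogue of Lemma \ref{lemma.B} and I would record it separately. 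Note that, in contrast with the conical case, no rate of decay is needed here, only uniform smallness at infinity: because the second fundamental form $A$ of $\mathcal{N}$ is uniformly bounded (the $\mathbb{S}^k$ factor has constant curvature and the $\mathbb{R}^{n-k}$ factor is flat) rather than decaying, the relevant correction terms are already controlled by smallness of $u$ alone.

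With this smallness in hand the metric computation is the exact counterpart of \eqref{eq2.k}: in a local frame on $\mathcal{N}$ that is orthonormal and diagonalizes $A$ at the base point, one obtains $g_{ij}=(1-h_{ii}u)(1-h_{jj}u)\delta_{ij}+\partial_iu\,\partial_ju$. Since $|A|$ is bounded and $|u|,|\nabla^{\mathcal{N}}u|\le\epsilon$, both correction terms are $O(\epsilon)$, so $g_{ij}=\delta_{ij}+b_{ij}$ and $g^{ij}=\delta_{ij}+d_{ij}$ with $|b_{ij}|,|d_{ij}|\le C\epsilon$ and $\det(g_{ij})$ pinched between $1-C\epsilon$ and $1+C\epsilon$. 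Taking $\epsilon$ small yields the pointwise comparison $|\nabla f|^2\ge\tfrac12|\nabla^{\mathcal{N}}(f\circ\Pi)|^2$, and because $\big||\Pi(\mathbf{z})|-|\mathbf{z}|\big|\le|u|\le 1$ the Gaussian weights on $\Sigma\backslash B_r$ and on $\mathcal{N}\backslash B_{r-1}$ are comparable; hence the weighted Dirichlet and $L^2$ integrals on the two domains differ by fixed factors $C_3,C_4$. Applying Lemma \ref{lemma.D} with the constant $CC_3^{-1}C_4^{-1}$ and chaining the three inequalities exactly as in the last display of the proof of Lemma \ref{lemma.C} produces \eqref{eq-lemma.E}.

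The main obstacle is the claimed smallness estimate for $u$ and $\nabla^{\mathcal{N}}u$: one must upgrade the translation-type convergence in the definition of an asymptotically cylindrical end to uniform smallness of the graphical data outside a large ball, and then propagate this through the frame change (the analogue of \eqref{eq2.c}--\eqref{eq2.i}) so as to bound the \emph{intrinsic} gradient on $\mathcal{N}$ rather than the ambient one. Once the cylindrical version of Lemma \ref{lemma.B} is secured, the remainder of the proof is a faithful transcription of the conical argument with $\mathcal{C}$ replaced by $\mathcal{N}$ and Lemma \ref{lemma.A} replaced by Lemma \ref{lemma.D}.
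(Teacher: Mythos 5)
Your proposal is correct and follows essentially the same route as the paper: the paper likewise first upgrades the translational convergence to uniform smallness of the graphical data $|u|,|\nabla u|\le\epsilon$ outside a large ball (by patching together the graphs of the translates $\Sigma-t\xi$ over annular regions of the cylinder), and then repeats the metric and weight comparisons from the conical case verbatim before invoking Lemma \ref{lemma.D}. Your observation that only uniform smallness, rather than a decay rate, is needed because $|A|$ of the cylinder is bounded is exactly the point that makes this transcription work.
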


\begin{proof}
We first claim that for given $\epsilon>0$, there exists $r_1$  such that, out of compact set $K$,
$\Sigma$ is given the the graph of a smooth function
$u(\cdot):\big(\mathbb{S}^k({\sqrt{2k}})\times \mathbb{R}^{n-k}\big)\backslash B_{r_1}\to\mathbb{R}$.
Moreover, for any $\mathbf{z}_0\in \big(\mathbb{S}^k({\sqrt{2k}})\times \mathbb{R}^{n-k}\big)\backslash B_{r_1}$,
$u$ satisfies
\begin{equation} \label{eq2.o}
|u(\mathbf{z}_0)|\leq \epsilon
\end{equation}
and
\begin{equation} \label{eq2.o-b}
 |\nabla^{\mathbb{S}^k({\sqrt{2k}})\times \mathbb{R}^{n-k}} u(\mathbf{z}_0)|\leq \epsilon,
\end{equation}
where $\nabla^{\mathbb{S}^k({\sqrt{2k}})\times \mathbb{R}^{n-k}}$ is the gradient operator on the generalized cylinder.
In fact, denote $\Sigma_t=\Sigma-t\xi$.
According to the definition of asymptotically cylindrical ends,
there exists a positive constant $T=T(\epsilon)$ such that if $t\ge T$.
Then there exists a subset
$\Omega_t \subset\Sigma\cap \big(B_{5}\backslash B_{1/5}\big)$
written as the graph of a function
$$\tilde{u}(\cdot,t):\big(\mathbb{S}^k({\sqrt{2k}})\times \mathbb{R}^{n-k}\big)\cap\big(B_{4}\backslash B_{1/4}\big)\to\mathbb{R}$$
satisfying
$$|\tilde{u}|\le\epsilon \quad \mbox{and} \quad
|\nabla^{\mathbb{S}^k({\sqrt{2k}})\times \mathbb{R}^{n-k}}\tilde{u}|\le \epsilon.$$
Set $r_1\ge T+1$.
Define
$u:\big(\mathbb{S}^k({\sqrt{2k}})\times \mathbb{R}^{n-k}\big)\backslash B_{r_1}\to\mathbb{R}$
by
\begin{align}
u(\mathbf{z}_0)=\tilde{u}(\mathbf{z}_0-k\xi,k),
\end{align}
where $k+1\le|\mathbf{z}_0|\le k+3$.
Since $\Sigma_t$ is the translation of $\Sigma$ in the $\xi$ direction,
 $u$ is well defined.
Moreover, \eqref{eq2.o} and \eqref{eq2.o-b} holds.

Then, using the same strategy used in Lemma \ref{lemma.C},
we can show that  $g_{ij}$, $|\mathbf{x}|$
and  $|\nabla f|$ on $\Sigma$ can be bounded by
the corresponding quantities on the generalized cylinder.
Therefore, according to Lemma \ref{lemma.D}, we know that \eqref{eq-lemma.E} holds.
This finishes the proof of Lemma \ref{lemma.E}.
\end{proof}

Recall that Fischer-Colbrie\cite{FC85} proved that minimal surfaces has finite Morse index
if and only if it is stable outside a compact set (cf. \cite{PRS08}).
We can prove the following similar result for self-shrinkers.

\begin{lemma} \label{lemma.F}
Let $\Sigma$ be a complete self-shrinker in $\mathbb{R}^{n+1}$. Then the following are equivalent:
\begin{enumerate}
\item $\Sigma$ has finite Morse index.
\item There exists a compact set $K$ such that $\Sigma\backslash K$ is stable.
\end{enumerate}
\end{lemma}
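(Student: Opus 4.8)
The plan is to pass from the Jacobi operator to its weighted quadratic form. Writing the weight $e^{-|\mathbf{x}|^2/4}d\mu$ and integrating by parts (using that the drift Laplacian $\Delta-\tfrac12\langle\mathbf{x},\nabla\cdot\rangle$ is self-adjoint with respect to this measure), the second variation becomes
\[
I(f,f) = \int_\Sigma \Big( |\nabla f|^2 - \big(|A|^2 + \tfrac{1}{2}\big) f^2 \Big)\, e^{-\frac{|\mathbf{x}|^2}{4}} d\mu, \qquad f \in C^\infty_0(\Sigma),
\]
so that $I(f,f)=\langle (-L)f,f\rangle$ in the weighted $L^2$ inner product. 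I would first record the standard identification that $\mathrm{Ind}(\Sigma)=\sup_{\Omega\subset\subset\Sigma}\mathrm{Ind}(\Omega)$ equals the total multiplicity of the negative spectrum of the self-adjoint operator $-L$ on $L^2\big(\Sigma,e^{-|\mathbf{x}|^2/4}d\mu\big)$, equivalently the maximal dimension of a subspace of $C^\infty_0(\Sigma)$ on which $I$ is negative definite; this follows from the min-max principle together with the domain monotonicity already noted in the introduction and an exhaustion of $\Sigma$. Since $\Sigma$ is complete, $-L=-\big(\Delta-\tfrac12\langle\mathbf{x},\nabla\cdot\rangle\big)-\big(|A|^2+\tfrac12\big)$ sits in the usual drift-Laplacian framework, and (after reducing to the case where the form is bounded below, the contrary case forcing both conditions to fail) its spectrum below $\inf\sigma_{\mathrm{ess}}(-L)$ is discrete of finite multiplicity.

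For the implication \emph{(1)$\Rightarrow$(2)} I would argue directly. Suppose $\mathrm{Ind}(\Sigma)=m<\infty$, realised on a subspace $W=\mathrm{span}\{f_1,\dots,f_m\}\subset C^\infty_0(\Sigma)$ on which $I$ is negative definite, and choose a compact $K$ containing $\bigcup_i\mathrm{supp}(f_i)$ in its interior. If $\Sigma\backslash K$ were not stable there would exist $g\in C^\infty_0(\Sigma\backslash K)$ with $I(g,g)<0$. Because $\mathrm{supp}(g)$ is disjoint from every $\mathrm{supp}(f_i)$, the bilinear cross terms $I(g,f_i)$ have identically vanishing integrand, so the Gram matrix of $I$ on $\{f_1,\dots,f_m,g\}$ is block diagonal and negative definite; enclosing all supports in a relatively compact $\Omega$ would give $\mathrm{Ind}(\Omega)\ge m+1$, contradicting $\mathrm{Ind}(\Sigma)=m$. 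Hence $\Sigma\backslash K$ is stable.

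The substance is in \emph{(2)$\Rightarrow$(1)}. Assume $\Sigma\backslash K$ is stable, i.e. $I(f,f)\ge 0$ for all $f\in C^\infty_0(\Sigma\backslash K)$. The key is Persson's characterization of the bottom of the essential spectrum,
\[
\inf \sigma_{\mathrm{ess}}(-L) = \sup_{\Omega \subset\subset \Sigma}\ \inf\Big\{ I(f,f) : f \in C^\infty_0(\Sigma\backslash \overline{\Omega}),\ \|f\|_{L^2(e^{-|\mathbf{x}|^2/4}d\mu)} = 1 \Big\}.
\]
Taking any relatively compact $\Omega\supset K$, every competitor $f$ lies in $C^\infty_0(\Sigma\backslash K)$, so the inner infimum is $\ge 0$ and therefore $\inf\sigma_{\mathrm{ess}}(-L)\ge 0$. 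Since the spectrum of $-L$ lying strictly below $\inf\sigma_{\mathrm{ess}}(-L)$ consists of isolated eigenvalues of finite multiplicity and is bounded below, there are only finitely many negative eigenvalues, whence $\mathrm{Ind}(\Sigma)<\infty$.

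The main obstacle is exactly the functional-analytic input for the hard direction: verifying that $-L$ is a semibounded, self-adjoint (essentially self-adjoint on $C^\infty_0$, or Friedrichs) operator on the weighted space with discrete spectrum below its essential spectrum, and justifying Persson's formula in this complete weighted setting. A self-contained alternative avoiding Persson is an IMS-type localization: choosing smooth $\phi,\psi$ with $\phi^2+\psi^2=1$, $\phi\equiv 1$ near $K$ and $\psi\equiv 1$ off a fixed relatively compact $\Omega_0\supset K$, one computes
\[
I(f,f) = I(\phi f, \phi f) + I(\psi f, \psi f) - \int_\Sigma \big(|\nabla \phi|^2 + |\nabla \psi|^2\big) f^2\, e^{-\frac{|\mathbf{x}|^2}{4}} d\mu,
\]
where $I(\psi f,\psi f)\ge 0$ by stability outside $K$. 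The difficulty, and the reason I would ultimately prefer the spectral route, is that the leaked term is supported in the compact transition annulus $\Omega_0\backslash K$ and must be controlled by a Rellich-type compactness argument on the relatively compact domain $\Omega_0$ in order to conclude that at most finitely many independent directions can render $I$ negative.
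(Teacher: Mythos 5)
The paper states this lemma without proof, offering only the pointer to Fischer--Colbrie \cite{FC85} and \cite{PRS08} for the minimal-surface analogue, so your attempt can only be measured against the standard argument. Your direction (1)$\Rightarrow$(2) is correct: identifying $\mathrm{Ind}(\Sigma)$ with the maximal dimension of a subspace of $C^\infty_0(\Sigma)$ on which $I$ is negative definite, and then observing that a $g$ with disjoint support makes the Gram matrix of $I$ on $\{f_1,\dots,f_m,g\}$ block diagonal and negative definite, is exactly the usual proof.

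The hard direction, however, has a genuine gap precisely in the route you say you prefer. Persson's formula applied to stability of $\Sigma\backslash K$ yields only $\inf\sigma_{\mathrm{ess}}(-L)\ge 0$, not $>0$, and the inference ``the spectrum strictly below $\inf\sigma_{\mathrm{ess}}(-L)$ is discrete and bounded below, hence there are finitely many negative eigenvalues'' is false: isolated eigenvalues of finite multiplicity may accumulate at $0$ from below, in which case $0\in\sigma_{\mathrm{ess}}$ and $\inf\sigma_{\mathrm{ess}}=0$ while the negative spectrum is infinite. The Coulomb operator $-\Delta-|x|^{-1}$ on $\mathbb{R}^3$ is the standard example of a semibounded Schr\"odinger operator with $\sigma_{\mathrm{ess}}=[0,\infty)$ and infinitely many negative eigenvalues, and nothing in your argument rules out this behaviour for $-L$. (That operator is of course not stable outside any compact set, which is why the lemma is nonetheless true --- but that is exactly the information the Persson step discards.) The correct proof is the one you relegate to an ``alternative'': the IMS identity together with $I(\psi f,\psi f)\ge 0$ shows that any subspace of $C^\infty_0(\Sigma)$ on which $I$ is negative definite injects, with uniform $H^1$ control after cutting off, into functions on the relatively compact transition region, and Rellich compactness there caps its dimension; this is Fischer--Colbrie's argument and the one carried out in \cite{PRS08}, and the ``difficulty'' you flag is the actual content of the proof rather than an obstacle to be avoided. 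Note finally that in the application to Theorem \ref{thm.1} the authors actually prove the stronger estimate $I(f,f)\ge\tfrac12\int f^2 e^{-|\mathbf{x}|^2/4}d\mu$ outside a compact set; under that hypothesis Persson does give $\inf\sigma_{\mathrm{ess}}(-L)\ge\tfrac12>0$ and your spectral argument becomes valid, but it does not prove the lemma as stated, where only stability is assumed.
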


Finally we give the proof of Theorem 1.1 by using Lemmas \ref{lemma.C}-\ref{lemma.F}.

\vskip 3mm
{\noindent \bf Proof of Theorem \ref{thm.1}}\hspace{0.2cm}
According to the assumption,  there exists a positive constant $r_0$ such that $\Sigma\backslash B_{r_0}$
consists of finite ends $E_1, \cdots, E_N$,
and each $E_i$ are either asymptotically conical or asymptotically cylindrical.
It is obvious that for the asymptotically conical end $E_i$, the
squared norm of second fundamental form $|A(\mathbf{x})|^2\to 0$ as $\mathbf{x}\in E_i$ goes to infinite.
On the other hand, for asymptotically cylindrical end $E_j$,  the squared norm of second fundamental form $|A(\mathbf{x})|^2\to 1/2$ as $\mathbf{x}\in E_j$ goes to infinite.
Hence we can choose $r_0$ large enough such that $|A|^2\le 1$ on all $E_i$.

From Lemmas \ref{lemma.C} and  \ref{lemma.E}, we know that there exist $r_i, i=1,\cdots,N$ such that for any $f\in C^\infty_0(E_i\backslash B_{r_i})$,
\begin{align}
\int_{E_i\backslash B_{r_i}} |\nabla f|^2 e^{-\frac{|\mathbf{x}|^2}{4}} d\mu \ge
	2\int_{E_i\backslash B_{r_i}} |f|^2 e^{-\frac{|\mathbf{x}|^2}{4}} d\mu.
\end{align}
Therefore, we have
\begin{equation}
\begin{aligned} \label{eq3.1}
I(f,f)&= \int_{E_i\backslash B_{r_i}} |\nabla f|^2 e^{-\frac{|\mathbf{x}|^2}{4}} d\mu
	-\int_{E_i\backslash B_{r_i}} \big(|A|^2+ \frac{1}{2}\big) f^2 e^{-\frac{|\mathbf{x}|^2}{4}} d\mu \\
	&\ge \frac{1}{2}\int_{E_i\backslash B_{r_i}} |f|^2 e^{-\frac{|\mathbf{x}|^2}{4}} d\mu \\
    &\ge 0.
\end{aligned}
\end{equation}
Set $r=\max\{r_0,r_1,\cdots,r_N \}$. According to the definition of stability, and using \eqref{eq3.1},
we deduce that $\Sigma\backslash B_r$ is stable.
Then Lemma \ref{lemma.F} tells that $\Sigma$ has finite Morse index. This completes the proof of Theorem \ref{thm.1}.
$\hfill{} \Box$

\vskip 5mm
\begin{flushleft}
\textsc{Xu-Yong Jiang\\
School of Computer Science and Artificial Intelligence\\
Changzhou University\\
Changzhou 213164, P. R. China}\\
\textit{e-mail:} \verb"jiangxy_1@163.com"
\end{flushleft}

\begin{flushleft}
\textsc{He-Jun Sun\\
College of Science\\
Nanjing University of Science and Technology\\
Nanjing 210094, P. R. China}\\
\textit{e-mail:} \verb"hejunsun@njust.edu.cn"
\end{flushleft}

\begin{flushleft}
\textsc{Peibiao Zhao\\
College of Science\\
Nanjing University of Science and Technology\\
Nanjing 210094, P. R. China}\\
\textit{e-mail:} \verb"pbzhao@njust.edu.cn"
\end{flushleft}

\end{document}